\documentclass[b5paper,11pt,onecolumn,twoside,UTF8]{article}
\usepackage[
	]{ctex}
\usepackage{NCMLEN}     %Load the style file.

\setboolean{FinalVersion}{true}
\newcommand{\mytitle}
{Proof Theory for Non-Contingency Logic}

\newcommand{\myrunningtitle}
{Proof Theory for Non-Contingency Logic}

\newcommand{\myfirstauthor}
{Borja Sierra Miranda}											  % Modify this only after your paper is accepted.

\newcommand{\myfirstaffiliation}{
{Institut f\"ur Informatik, Universit\"at Bern}
}

\newcommand{\myfirstemail}
{ borja.sierra@unibe.ch}
\renewcommand{\mysecondauthor}{Yunsong Wang}
\newcommand{\mysecondaffiliation}{
{Philosophy Department, Peking University}
}

\newcommand{\mysecondemail}
{yunsong.wang@pku.edu.cn}
\renewcommand{\mythirdauthor}{Lukas Zenger}
\newcommand{\mythirdaffiliation}{
{Philosophy Department, Peking University}
}

\newcommand{\mythirdemail}
{lukas.zenger@pku.edu.cn}

\newcommand{\myfourthauthor}{null}

\newcommand{\myfourthaffiliation}{
{Department 4, University 4}
}
\newcommand{\myfourthemail}
{xxxx@xxxx.xxx}

\newcommand{\myfifthauthor}{null}

\newcommand{\myfifthaffiliation}{
{Department 5, University 5}
}
\newcommand{\myfifthemail}
{ xxxx@xxxx.xxx}

\newcommand{\mythanks}{null}
\newcommand{\myabstract}
{Non-contingency logic $(\KWL)$  replaces the usual necessity operator of modal logic with an operator expressing that a proposition is necessarily true or necessarily false. Besides its intrinsic logical interest, it admits natural interpretations as knowing whether in epistemic logic and as decidability under the arithmetical interpretation of provability logic. Although the semantics of non-contingency logic have been extensively studied, its proof theory remains comparatively underdeveloped. In this paper, we develop a uniform proof-theoretic framework for $\KWL$ over a broad class of frame conditions. Our approach is based on generalized path conditions (GPCs), a grammar-theoretic formalism that uniformly captures many standard modal frame properties. For every finite set $\gpc$ of GPCs, we construct a corresponding labelled sequent calculus. All calculi share a common set of logical rules and differ only by a single structural rule generated from $\gpc$, which captures the underlying frame conditions. We prove that these calculi are sound and complete with respect to their corresponding frame classes, thereby providing a uniform proof theory for a large family of non-contingency logics.}
\newcommand{\mykeywords}
{non-contingency $\cdot$ non-normal modal logic $\cdot$ proof theory $\cdot$ labelled sequent calculus $\cdot$ generalized path condition}

\usepackage[pdfborder=0]{hyperref}
\usepackage{latexsym}
\usepackage{graphicx}
\usepackage{xcolor}

\usepackage{graphicx} % Required for inserting images
\usepackage{tikz}
\usetikzlibrary{arrows,calc,patterns,positioning,shapes}
   \usetikzlibrary{decorations.pathmorphing}
   \tikzset{
       modal/.style={>=stealth',shorten >=1pt,shorten <=1pt,auto,
                     node distance=1.5cm,semithick},
       world/.style={circle,draw,minimum size=1cm,fill=gray!15},
       point/.style={circle,draw,fill=black,inner sep=0.5mm},
       reflexive/.style={->,in=120,out=60,loop,looseness=#1},
       reflexive/.default={5},
       reflexive point/.style={->,in=135,out=45,loop,looseness=#1},
       reflexive point/.default={25},
       }

 \tikzset{
       reflexive above/.style={->,loop,in=120,out=60,looseness=#1},
       reflexive above/.default={7},
       reflexive below/.style={->,loop,in=240,out=300,looseness=#1},
       reflexive below/.default={7},
       reflexive left/.style={->,loop,in=150,out=210,looseness=#1},
       reflexive left/.default={7},
       reflexive right/.style={->,loop,in=30,out=330,looseness=#1},
       reflexive right/.default={7}
}
\usetikzlibrary{arrows.meta}
\usetikzlibrary{calc}
\usepackage{tikz-cd}
\usepackage{xfrac}
\usepackage{amsmath}
\usepackage{amssymb}
\usepackage{relsize}
\usepackage{mathtools}
\usepackage{bussproofs}
\usepackage{proof}
\usepackage{enumitem}
\usepackage{comment}
\usepackage{float}
\usepackage{amsthm}
\usepackage{mathtools}
\usepackage{mathrsfs} 
\usepackage{esvect,pgf, tikz, color}
\usetikzlibrary{arrows, automata, positioning}
\usepackage{stmaryrd}

{\def\scalefactor{#1}\begin{center}\proofSkipAmount \leavevmode}%
	{\scalebox{\scalefactor}{\DisplayProof}\proofSkipAmount \end{center} }

\newcommand{\K}{\mathop{\raisebox{-0.2ex}{\scalebox{0.85}{\rotatebox{90}{$\nabla$}}}}}

\newcommand{\fw}{\mathcal{M}}
\newcommand{\fv}{\mathcal{N}}
\newcommand{\W}{\mathcal{W}}
\newcommand{\V}{\mathcal{V}}
\newcommand{\R}{\mathcal{R}}
\newcommand{\Rel}{\mathrel{\R}}
\newcommand{\Prop}{\mathsf{Prop}}
\newcommand{\Var}{\mathsf{Var}}
\newcommand{\F}{\mathcal{F}}
\newcommand{\G}{\mathcal{G}}
\newcommand{\Fr}{\mathscr{F}}

\newcommand{\LKW}{\mathcal{L}_{\mathsf{NCL}}}
\newcommand{\KWL}{\mathsf{NCL}}
\newcommand{\KW}{\mathrm{NCL}}

\newcommand{\x}{\mathsf{x}}
\newcommand{\y}{\mathsf{y}}
\newcommand{\z}{\mathsf{z}}

\newcommand{\rel}{\mathrel{\mathsf{R}}}

\newcommand{\wt}{\alpha}

\newcommand{\xt}{x}

\newcommand{\xti}{\overline{x}}

\newcommand{\gpc}{\mathbf{G}}

\newif\ifcomments
\commentstrue   % set to \commentsfalse to hide

\begin{document}
\ifthenelse{\boolean{FinalVersion}}{}{\linenumbers}
\printtitlepage

\section{Introduction}

\noindent Modal logic traditionally studies the notion of necessity, represented by the operator $\Box$. Another fundamental modal notion is \emph{non-contingency} ($\K$). A statement $\varphi$ is non-contingent if it is necessarily true or necessarily false, and can therefore be defined in terms of necessity by $\K \varphi := \Box \varphi \vee \Box \neg \varphi$. 
Although definable in the language of normal modal logic, \emph{non-contingency logic} ($\KWL$), obtained by taking $\K$ instead of $\Box$ as primitive, has developed into an active research area in its own right~\cite{Humberstone1995,Zolin1999}. Besides its metaphysical interpretation, non-contingency admits several natural readings in applications. In epistemic logic, $\K\varphi$ expresses \emph{knowing whether} $\varphi$ holds rather than knowing that it holds, a notion that captures many practical reasoning tasks in which the objective is to resolve uncertainty rather than establish a particular outcome~\cite{FanWangDitmarsch_2015}.
Another important interpretation arises in arithmetic. Under the standard provability interpretation of Gödel--Löb logic (GL)~\cite{Solovay_1976}, the modal operator $\Box$ represents \emph{provability} in Peano Arithmetic ($\mathsf{PA}$). Consequently, $\K\varphi$ naturally expresses that $\varphi$ is \emph{decidable} in $\mathsf{PA}$, that is, either $\varphi$ or $\neg\varphi$ is provable.

% Since the pioneering work of Montgomery and Routley~\cite{MontgomeryRoutley_1966} in the 1960s, non-contingency logic has been studied extensively from both semantic and axiomatic perspectives. Important developments include results on frame definability and correspondence~\cite{Zolin1999}, a bisimulation theory~\cite{FanWangDitmarsch2014}, and complete axiomatizations over a range of standard frame classes~\cite{Kuhn1995,FanWangDitmarsch_2015,Fan2020FamilyK}. Non-contingency has also been studied under neighbourhood semantics~\cite{FanDitmarsch2015}. Fan~\cite{Fan2025Bundled} developed a uniform approach to axiomatization based on almost-definability schemas, applicable in particular to non-contingency logic.
% Zolin developed sound and complete sequent calculi for several frame classes, established Craig interpolation, and showed that cut cannot in general be eliminated~\cite{Zolin_2002}, while also giving a cut-based calculus for $\mathsf{GL}$-frames~\cite{Zolin_2001}. More recently, Venturi and Yago developed tableaux for essence and contingency~\cite{VenturiYago2021}. Petrukhin introduced a cut-free hypersequent calculus for non-contingency over S5-frames~\cite{Petrukhin_2022} and subsequently developed nested sequent calculi for several standard frame classes~\cite{Petrukhin2025}.

Since the pioneering work of Montgomery and Routley~\cite{MontgomeryRoutley_1966} in the 1960s, non-contingency logic has been studied extensively from both semantic and axiomatic perspectives. Important developments include results on frame definability and correspondence~\cite{Zolin1999}, a bisimulation theory~\cite{FanWangDitmarsch2014}, and complete axiomatizations over a range of standard Kripke frame classes~\cite{Kuhn1995,FanWangDitmarsch_2015,Fan2020FamilyK}. Non-contingency has also been studied under neighbourhood semantics~\cite{FanDitmarsch2015}. Fan~\cite{Fan2025Bundled} developed a uniform approach to axiomatization based on almost-definability schemas, applicable in particular to non-contingency logic. Zolin developed sound and complete sequent calculi for several frame classes, established Craig interpolation, and showed that cut cannot in general be eliminated~\cite{Zolin_2002}, while also giving a cut-based calculus for $\mathsf{GL}$-frames~\cite{Zolin_2001}. More recently, Venturi and Yago developed tableaux for essence and contingency~\cite{VenturiYago2021}. Petrukhin introduced a cut-free hypersequent calculus for non-contingency over S5-frames~\cite{Petrukhin_2022} and subsequently developed nested sequent calculi for several standard frame classes~\cite{Petrukhin2025}.

The main goal of this paper is to develop a uniform proof-theoretic framework for non-contingency logic. To this end, we present a single family of labelled sequent calculi parameterised by finite collections of \emph{generalized path conditions} (GPCs). Generalized path conditions provide a grammar-theoretic description of frame conditions and uniformly capture many standard properties of accessibility relations, including reflexivity, symmetry, transitivity, and Euclideanness. For every finite collection of GPCs, we obtain a labelled sequent calculus consisting of a fixed set of logical rules together with a single structural rule that saturates the relational component with respect to the chosen frame conditions. 
This yields a modular framework in which soundness and completeness are each established uniformly for the entire family of calculi: soundness by a standard induction and completeness by proof-search.
Our treatment of relational paths is inspired by propagation mechanisms for grammar logics with converse~\cite{TiuIanovskiGore2012} and by their development in refined labelled systems~\cite{Lyon_2021}. Unlike those normal modal systems, our object language takes non-contingency as primitive and therefore requires different logical rules.

Labelled calculi are particularly well suited to our approach because relational information is represented explicitly, allowing frame conditions to be handled by structural rules while leaving the logical rules unchanged. The resulting calculi support systematic proof-search and make it possible to treat a wide range of frame conditions within a single proof-theoretic framework. Overall, our results provide a uniform labelled-sequent framework for non-contingency logic over frame classes specified by finite sets of GPCs. \smallskip

\noindent \textbf{Outline.} The remainder of the paper is organized as follows. Section~\ref{sec: non-contingency logic} introduces non-contingency logic and generalized path conditions. Section~\ref{sec: sequent calculi} presents the labelled sequent calculi and proves soundness. Completeness is established in Section~\ref{sec: completeness} by means of proof-search. Section~\ref{sec: GL} concludes by discussing future work and by announcing further results about the proof-theoretic properties of our framework and about $\mathsf{GL}$ with non-contingency.

\section{Non-Contingency Logic}\label{sec: non-contingency logic}

\noindent The \emph{language} $\LKW$ of non-contingency logic is generated from a
countably infinite set $\Prop$ of propositions, the constant
$\bot$, implication $\to$, and the modal operator $\K$:
\[
\varphi ::= \bot \mid p \mid (\varphi\to\varphi) \mid \K\varphi,
\qquad p\in\Prop.
\]
We use $p,q,r,\ldots$ for propositional variables and
$\varphi,\psi,\gamma,\ldots$ for formulae. The formula $\K\varphi$ is read as ``$\varphi$ is non-contingent''. The remaining
Boolean connectives are defined as usual. We interpret formulae of $\KWL$ over standard Kripke models.
\iffalse
If complexity is needed later, define it by
\[
c(\bot)=c(p)=0,\qquad
c(\varphi\to\psi)=\max\{c(\varphi),c(\psi)\}+1,\qquad
c(\K\varphi)=c(\varphi)+1.
\]
For a finite set $\Gamma$, put
$c(\Gamma)\coloneqq\sum_{\varphi\in\Gamma}c(\varphi)$.
\fi
\begin{definition}
A \emph{frame} is a pair $\F=(\W,\R)$, where $\W$ is a nonempty set whose elements are called \emph{worlds} and $\R\subseteq\W\times\W$ is a binary relation called the \emph{accessibility relation}. A \emph{model} based on $\F$ is a triple
$\fw=(\W,\R,\V)$, equivalently $(\F,\V)$, where
$\V:\W\to 2^{\Prop}$ is a function called \emph{valuation}.
\end{definition}

We use $\F,\G$ for frames, $\fw,\fv$ for models, and
$w,v,u,\ldots$ for worlds. A frame is called \emph{reflexive} if $\R$ is reflexive; \emph{transitive} and \emph{symmetric} frames are defined analogously. An \emph{S4-frame} is
reflexive and transitive, and an \emph{S5-frame} is reflexive, transitive, and
symmetric. The \emph{truth relation} $\models$ is defined by the usual clauses for $\bot$, atoms,
and implication, together with
\[
\fw,w\models\K\varphi
\quad\Longleftrightarrow\quad
\bigl(\forall v\,(w\Rel v\Rightarrow\fw,v\models\varphi)\bigr)
\ \text{or}\
\bigl(\forall v\,(w\Rel v\Rightarrow\fw,v\not\models\varphi)\bigr).
\]
Given a model $\fw=(\W,\R,\V)$ and $w\in\W$, a formula $\varphi$ is
\emph{true at $w$} if ${\fw,w\models\varphi}$. It is \emph{true in
$\fw$}, written $\fw\models\varphi$, if it is true at every world of
$\fw$. A formula is \emph{satisfiable over} a class of frames $\Fr$ if
it is true at some world of some model based on a frame in $\Fr$; it is
\emph{unsatisfiable over $\Fr$} otherwise. It is \emph{valid over
$\Fr$} if it is true in every model based on a frame in $\Fr$ and
\emph{falsifiable over $\Fr$} otherwise. \smallskip

\noindent\textbf{Grammar-theoretic preliminaries.}
We present the preliminaries needed to define GPCs. We associate binary relations with words over the two-symbol alphabet $\Sigma\coloneqq\{\xt,\xti\}$. The symbols $\xt$ and $\xti$ represent
forward and backward traversal of the relation,
respectively. Let $\Sigma^*$ be the set of finite words over $\Sigma$, including the empty word $\epsilon$. Concatenation of words is written by juxtaposition, where $\epsilon\beta=\beta\epsilon=\beta$. We define the \emph{converse operation} on words by setting $\epsilon^{-1}\coloneqq\epsilon$, $\xt^{-1}\coloneqq\xti$ and $\xti^{-1}\coloneqq\xt$ and for
$\beta=z_1\cdots z_n$,
$\beta^{-1}\coloneqq z_n^{-1}\cdots z_1^{-1}$. The words $\beta$ and $\beta^{-1}$ are called \emph{converses}.

\begin{definition}\label{d:Sigma-system}
A \emph{$\Sigma$-system}\footnote{Note that $\Sigma$-systems are a restricted form of semi-Thue systems~\cite{Pos47}.} is a set $\mathcal S$ of productions
$z\mapsto\beta$, where $z\in\Sigma$ and $\beta\in\Sigma^*$.
\end{definition}
We are interested in a specific type of $\Sigma$-system. Namely for $\alpha\in\Sigma^*$, let
\[
\mathcal S_\alpha\coloneqq
\{\xt\mapsto\alpha,\ \xti\mapsto\alpha^{-1}\}.
\]
Thus in such $\Sigma$-systems every production is accompanied by its converse production.

\begin{definition}\label{d:semi-thue-deriv-lang}
Let $\mathcal S$ be a $\Sigma$-system. We write
$\beta\mapsto_{\mathcal S}\gamma$ if there are
$\rho,\tau\in\Sigma^*$ and a production $z\mapsto\eta\in\mathcal S$
such that $\beta=\rho z\tau$ and $\gamma=\rho\eta\tau$.
In this case, $\gamma$ is derived from $\beta$ in \emph{one step}. A \emph{derivation} of $\gamma$ from $\beta$, written $\beta \mapsto_{\mathcal{S}}^\ast \gamma$, is a
finite sequence of one-step derivations
\[
\beta=\delta_0\mapsto_{\mathcal S}\delta_1
\mapsto_{\mathcal S}\cdots
\mapsto_{\mathcal S}\delta_n=\gamma,
\]
whose length is $n$, where $\beta = \gamma$ for $n = 0$. Finally, put
$\mathcal L_{\mathcal S}(\beta)
\coloneqq\{\gamma\mid\beta\mapsto_{\mathcal S}^*\gamma\}$.
\end{definition}

\noindent \textbf{Generalized path conditions.}
Let $X$ be a set, which is allowed to be empty, and let
$Q\subseteq X\times X$. We associate a relation $Q_\beta$ on
$X$ with every word $\beta\in\Sigma^*$ by putting
\[
Q_\epsilon\coloneqq\{(a,a)\mid a\in X\},\qquad
Q_{\xt}\coloneqq Q,\qquad
Q_{\xti}\coloneqq Q^{-1},
\]
and
\[
Q_{\beta z}\coloneqq Q_z\circ Q_\beta
\qquad(\beta\in\Sigma^*,\ z\in\Sigma).
\]
Thus $(a,b)\in Q_{z_1\cdots z_n}$ precisely when there are
$a=a_0,\ldots,a_n=b$ such that
$(a_{i-1},a_i)\in Q_{z_i}$ for every $1\leq i\leq n$.
The domain $X$ is kept fixed throughout this construction.

% Let $\F = (\W, \R)$ be a frame. We define $\R_{\xt} \coloneqq \R$ and $\R_{\xti} \coloneqq \R^{-1} = \{(v,w) \mid (w,v) \in \R\}$. For the empty string $\epsilon$, we define $\R_{\epsilon} = \{(w,w) \mid w \in \W\}$. For a string $\wt\zt \in \Sigma^{*}$, we define $\R_{\wt\zt} \coloneqq \R_{\zt} \circ \R_{\wt}$ where $\circ$ is relation composition.\footnote{In other words, $\R_{\zt} \circ \R_{\wt} = \{(w,u) \mid \exists v \in \W, (w,v) \in \R_{\wt}, (v,u) \in \R_{\zt}\}$.} 

\begin{definition}
Let $\alpha\in\Sigma^*$. A frame $\F=(\W,\R)$ satisfies the
\emph{generalized path condition generated by $\alpha$}, denoted by
$\gpc(\alpha)$, if
\[
\R_\beta\subseteq\R
\qquad\text{for every }
\beta\in\mathcal L_{\mathcal S_\alpha}(\xt).
\]
For a finite set $\gpc$ of generalized path conditions, a frame is a
\emph{$\gpc$-frame} if it satisfies every member of $\gpc$. We write $\Fr_\gpc$ for the class of all $\gpc$-frames and put
\[
\mathcal S(\gpc)
\coloneqq
\bigcup_{\gpc(\alpha)\in\gpc}\mathcal S_\alpha.
\]
\end{definition}

\begin{lemma}[Word relations]\label{lem:word-relations}
Let $X$ be a set, let $Q,P\subseteq X\times X$, and let $\beta,\gamma\in\Sigma^*$.
\begin{enumerate}
\item $Q_{\beta\gamma}=Q_\gamma\circ Q_\beta$.
\item $Q_{\beta^{-1}}=(Q_\beta)^{-1}$.
\item If $Q\subseteq P$, then $Q_\beta\subseteq P_\beta$.
\item Suppose that $\delta=\rho z\tau$,
$\eta=\rho\gamma\tau$, and
$Q_\gamma\subseteq Q_z$, where $z\in\Sigma$.
Then $Q_\eta\subseteq Q_\delta$.
\end{enumerate}
\end{lemma}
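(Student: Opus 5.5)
All four items follow from the inductive definition $Q_{\beta z}=Q_z\circ Q_\beta$ together with its unfolded characterization: $(a,b)\in Q_{z_1\cdots z_n}$ iff there is a chain $a=a_0,\ldots,a_n=b$ with $(a_{i-1},a_i)\in Q_{z_i}$. I will prove them in the order stated, since the later items use the earlier ones.

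\textbf{Item 1.} I will argue by induction on the length of $\gamma$, with $\beta$ fixed. If $\gamma=\epsilon$, then $Q_{\beta\epsilon}=Q_\beta=Q_\epsilon\circ Q_\beta$, because $Q_\epsilon$ is the identity on $X$. If $\gamma=\gamma'z$, then
\[
Q_{\beta\gamma'z}=Q_z\circ Q_{\beta\gamma'}=Q_z\circ(Q_{\gamma'}\circ Q_\beta)=(Q_z\circ Q_{\gamma'})\circ Q_\beta=Q_{\gamma}\circ Q_\beta,
\]
using the induction hypothesis and associativity of composition.

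\textbf{Item 2.} I will use induction on $\beta$. The base cases are $\epsilon$, where the identity is self-converse, and the letters, where $Q_{\xti}=Q^{-1}$ and $Q_{\xt}=(Q^{-1})^{-1}$. For the step, take $\beta z$ and note $(\beta z)^{-1}=z^{-1}\beta^{-1}$. By item~1,
\[
Q_{z^{-1}\beta^{-1}}=Q_{\beta^{-1}}\circ Q_{z^{-1}}=(Q_\beta)^{-1}\circ(Q_z)^{-1}=(Q_z\circ Q_\beta)^{-1}=(Q_{\beta z})^{-1}.
\]
The only fact needed beyond the hypotheses is $(S\circ T)^{-1}=T^{-1}\circ S^{-1}$.

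\textbf{Item 3.} Again by induction on $\beta$. For letters, $Q\subseteq P$ gives $Q^{-1}\subseteq P^{-1}$. The step uses monotonicity of composition in both arguments. Alternatively, item~3 follows directly from the chain characterization.

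\textbf{Item 4.} By item~1, $Q_\eta=Q_\tau\circ Q_\gamma\circ Q_\rho$ and $Q_\delta=Q_\tau\circ Q_z\circ Q_\rho$. The inclusion $Q_\gamma\subseteq Q_z$ then transfers to $Q_\eta\subseteq Q_\delta$ by monotonicity of composition.

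None of these steps is a real obstacle. The only care point is the reversed order convention in $Q_{\beta z}=Q_z\circ Q_\beta$, which must be tracked consistently in items~1 and~2 so that the converse of a composite comes out in the correct order.
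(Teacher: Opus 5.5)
Your proposal is correct and follows the paper's proof. Item~1 is by induction on the length of $\gamma$, items~2 and~3 are by induction on the word (using item~1 in the step for item~2 is a fine way to supply what the paper leaves implicit), and item~4 applies item~1 twice and then monotonicity of composition, with only the trivial base case $Q_\epsilon=P_\epsilon$ of item~3 left unstated.
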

\begin{proof}
The concatenation identity follows by induction on the length
of $\gamma$. The converse and monotonicity claims follow by
induction on the length of the relevant word. For the final
claim, use the concatenation identity twice to obtain
\[
Q_\eta=Q_\tau\circ Q_\gamma\circ Q_\rho
\subseteq
Q_\tau\circ Q_z\circ Q_\rho
=Q_\delta
\]
and monotonicity of relational composition.
\end{proof}
The paired productions in $\mathcal S(\gpc)$ entail the following
useful converse property:
\begin{equation}\label{eq:converse-derivation}
\beta\mapsto_{\mathcal S(\gpc)}^*\gamma
\quad\Longrightarrow\quad
\beta^{-1}\mapsto_{\mathcal S(\gpc)}^*\gamma^{-1}.
\end{equation}
Indeed, the inverse of an application of
$\xt\mapsto\alpha$ is an application of
$\xti\mapsto\alpha^{-1}$, and conversely. Induction on the
length of a derivation proves \eqref{eq:converse-derivation}.
\begin{lemma}[Finite basis]\label{lem:gpc-finite-basis}
Let $\F=(\W,\R)$ be a frame and let $\alpha\in\Sigma^*$.
Then
\[
\F\models\gpc(\alpha)
\quad\Longleftrightarrow\quad
\R_\alpha\subseteq\R.
\]
Consequently, $\F$ is a $\gpc$-frame if and only if
$\R_\alpha\subseteq\R$ for every
$\gpc(\alpha)\in\gpc$.
\end{lemma}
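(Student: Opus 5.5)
The forward direction is immediate: since $\xt\mapsto_{\mathcal S_\alpha}\alpha$ is a one-step derivation, $\alpha\in\mathcal L_{\mathcal S_\alpha}(\xt)$, so $\F\models\gpc(\alpha)$ gives $\R_\alpha\subseteq\R$ directly from the definition.

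For the converse, assume $\R_\alpha\subseteq\R$. We show by induction on the length $n$ of a derivation $\xt=\delta_0\mapsto_{\mathcal S_\alpha}\cdots\mapsto_{\mathcal S_\alpha}\delta_n=\beta$ that $\R_\beta\subseteq\R$. The base case $n=0$ is trivial, since then $\beta=\xt$ and $\R_\xt=\R$. For the inductive step, the claim we actually carry is the stronger \emph{monotonicity along derivations}: $\R_{\delta_{i+1}}\subseteq\R_{\delta_i}$ for each $i$. Chaining these inclusions gives $\R_\beta\subseteq\R_{\xt}=\R$.

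To prove the one-step inclusion, write $\delta_i=\rho z\tau$ and $\delta_{i+1}=\rho\gamma\tau$ with $z\mapsto\gamma\in\mathcal S_\alpha$, and apply Lemma~\ref{lem:word-relations}(4). This requires $\R_\gamma\subseteq\R_z$, which we check for each production:
\begin{itemize}
\item If the production is $\xt\mapsto\alpha$, the requirement is exactly the hypothesis $\R_\alpha\subseteq\R=\R_\xt$.
\item If the production is $\xti\mapsto\alpha^{-1}$, then by Lemma~\ref{lem:word-relations}(2) we have $\R_{\alpha^{-1}}=(\R_\alpha)^{-1}$. Since taking converses preserves inclusion, this gives $(\R_\alpha)^{-1}\subseteq\R^{-1}=\R_{\xti}$.
\end{itemize}
This pairing of each production with its converse is the only place where care is needed. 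It is precisely why $\mathcal S_\alpha$ contains both productions: otherwise a rewrite of $\xti$ would not be controlled by the hypothesis.

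The consequence for a finite set $\gpc$ follows at once, because a $\gpc$-frame is by definition one satisfying each $\gpc(\alpha)\in\gpc$, and the equivalence just proved applies to each member individually. No real obstacle is expected; the whole argument is a routine induction once Lemma~\ref{lem:word-relations} is in hand.
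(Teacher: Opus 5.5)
Your proposal is correct and follows the paper's proof essentially step for step. The forward direction goes via $\alpha\in\mathcal L_{\mathcal S_\alpha}(\xt)$. The converse checks $\R_\gamma\subseteq\R_z$ for both productions, using Lemma~\ref{lem:word-relations}(2) for $\xti\mapsto\alpha^{-1}$, and then chains Lemma~\ref{lem:word-relations}(4) along the derivation. One small aside: your closing remark misattributes the purpose of the paired production, since omitting $\xti\mapsto\alpha^{-1}$ would only make this lemma easier; the pairing is there to secure the converse property \eqref{eq:converse-derivation} used for the closure construction.
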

\begin{proof}
If $\F\models\gpc(\alpha)$, then
$\alpha\in\mathcal L_{\mathcal S_\alpha}(\xt)$, because
$\xt\mapsto\alpha$ is a production of $\mathcal S_\alpha$.
Hence $\R_\alpha\subseteq\R$.
Conversely, suppose that $\R_\alpha\subseteq\R$. By
Lemma~\ref{lem:word-relations},
\[
\R_{\alpha^{-1}}
=(\R_\alpha)^{-1}
\subseteq\R^{-1}
=\R_{\xti}.
\]
Thus both productions of $\mathcal S_\alpha$ replace a letter
by a word whose relation is contained in the relation of that
letter. Lemma~\ref{lem:word-relations}(4), followed by
induction on derivation length, gives
\[
\xt\mapsto_{\mathcal S_\alpha}^*\beta
\quad\Longrightarrow\quad
\R_\beta\subseteq\R_{\xt}=\R.
\]
This is exactly the condition $\gpc(\alpha)$.
The final assertion follows by applying this equivalence to
each member of $\gpc$.
\end{proof}

% \begin{example}
% By Lemma~\ref{lem:gpc-finite-basis}, the following
% correspondences are immediate:
% \[
% \begin{array}{rcl}
% \gpc(\epsilon) &\Longleftrightarrow&
% \R_\epsilon\subseteq\R \quad\text{(reflexivity)},\\
% \gpc(\xt\xt) &\Longleftrightarrow&
% \R^2\subseteq\R \quad\text{(transitivity)},\\
% \gpc(\xti) &\Longleftrightarrow&
% \R^{-1}\subseteq\R \quad\text{(symmetry)},\\
% \gpc(\xti\xt) &\Longleftrightarrow&
% \R\circ\R^{-1}\subseteq\R
% \quad\text{(right Euclideanness)}.
% \end{array}
% \]
% Note that seriality is not expressible by GPCs.
% \end{example}

\begin{example}
By Lemma~\ref{lem:gpc-finite-basis}, the following
correspondences are immediate:
\[
\setlength{\arraycolsep}{2pt}
\begin{array}{@{}rcl@{\quad}rcl@{}}
\gpc(\epsilon) &\Longleftrightarrow&
\R_\epsilon\subseteq\R \;\text{(reflexivity)},
& \gpc(\xt\xt) &\Longleftrightarrow&
\R^2\subseteq\R \;\text{(transitivity)},\\
\gpc(\xti) &\Longleftrightarrow&
\R^{-1}\subseteq\R \;\text{(symmetry)},
& \gpc(\xti\xt) &\Longleftrightarrow&
\R\circ\R^{-1}\subseteq\R
\;\text{(right Euclideanness)}.
\end{array}
\]
Note that seriality is not expressible by GPCs.
\end{example}

\begin{definition}
For a finite set $\gpc$ of GPCs, the logic $\mathbf{NCL}_\gpc$ is defined to be the set of formulae valid over $\Fr_\gpc$.
\end{definition}
Thus $\mathbf{NCL}_\emptyset$ is the logic of all frames. The logics of
reflexive, S4-, and S5-frames are, respectively, $\mathbf{NCL}_{\{\gpc(\epsilon)\}}$, $\mathbf{NCL}_{\{\gpc(\epsilon),\gpc(\xt\xt)\}}$ and $\mathbf{NCL}_{\{\gpc(\epsilon),\gpc(\xt\xt),\gpc(\xti)\}}$.

\begin{lemma}
For every finite set $\gpc$ of GPCs, the logic $\mathbf{NCL}_{\gpc}$ is non-normal:
\[
\K(p\to q)\to(\K p\to\K q)
\notin\mathbf{NCL}_\gpc.
\]
\end{lemma}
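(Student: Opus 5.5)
Since every frame class $\Fr_\gpc$ contains the frames that satisfy all GPCs simultaneously, the plan is to find a single model whose underlying frame is a $\gpc$-frame for \emph{every} finite $\gpc$, and which falsifies $\K(p\to q)\to(\K p\to\K q)$ at some world. The natural candidate is a universal frame: take $\W=\{w,v,u\}$ and $\R=\W\times\W$. Then for every word $\beta\in\Sigma^*$ we have $\R_\beta\subseteq\W\times\W=\R$ (in particular $\R_\alpha\subseteq\R$ for every $\alpha$), so by Lemma~\ref{lem:gpc-finite-basis} this frame satisfies every $\gpc(\alpha)$, hence it is a $\gpc$-frame for every finite $\gpc$. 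This step is immediate because the universal relation contains every relation on $\W$.

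Next I choose a valuation that falsifies the formula. Since $\R$ is universal, all three worlds have the same successors, namely $\W$. So truth of $\K\varphi$ at any world just means that $\varphi$ is true everywhere or false everywhere. I need three things. First, $p\to q$ must be constant on $\W$; I will make it false everywhere. Second, $p$ must be constant; since $p\to q$ is false, $p$ holds everywhere. Third, $q$ must not be constant; but $q$ must fail wherever $p\to q$ fails, so this is impossible with $p\to q$ false everywhere.

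So instead I make $p\to q$ true everywhere and $p$ false everywhere, and let $q$ be true at $w$ and false at $v$. Concretely, set $\V(w)=\{q\}$ and $\V(v)=\V(u)=\emptyset$. Then $p\to q$ holds at every world, so $\K(p\to q)$ is true. Also $p$ is false at every world, so $\K p$ is true. But $q$ is true at $w$ and false at $v$, so $\K q$ is false at $w$ (indeed at every world). Hence the formula fails at $w$.

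There is no real obstacle. The only point needing care is checking the GPC condition via Lemma~\ref{lem:gpc-finite-basis}, which the universal relation makes trivial. Even two worlds would suffice; the third world is harmless, and we may drop it.
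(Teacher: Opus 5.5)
Your proof is correct and follows the paper's argument: the same three-world universal frame, which is a $\gpc$-frame for every $\gpc$ because $\R_\beta\subseteq\R$ holds trivially, with $p$ false everywhere and $q$ non-constant. The only difference is which world makes $q$ true, and your remark that two worlds already suffice is also correct.
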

\begin{proof}
Let $\W=\{w,v,u\}$, let $\R=\W\times\W$, and let
$\fw=(\W,\R,\V)$, where
$\V(w)=\V(v)=\emptyset$ and $\V(u)=\{q\}$. 
Since $\R$ is universal, $\R_\beta\subseteq\R$ for every
$\beta\in\Sigma^*$; hence $(\W,\R)$ is a $\gpc$-frame for every
$\gpc$.
The atom $p$ is false everywhere, so $p\to q$ is true everywhere.
Thus $\fw,w\models\K(p\to q)$ and $\fw,w\models\K p$.
However, $q$ is false at $v$ and true at $u$, and both are
$\R$-successors of $w$. Hence $\fw,w\not\models\K q$.
\end{proof}

\section{Sequent Calculi}\label{sec: sequent calculi}

% \noindent This section introduces for every finite set $\gpc$ of GPCs a labelled sequent calculus $\KW_\gpc$. Each calculus consists of the same set of axioms and rules for implication and $\K$, as well as a single rule parametrized by $\gpc$ which captures the frame conditions expressed in $\gpc$.

\noindent This section introduces, for every finite set $\gpc$ of GPCs, a labelled sequent calculus $\KW_\gpc$. We follow the labelled-sequent methodology in which relational semantics is represented explicitly by labels and relational atoms~\cite{Negri2005}.

Let $\Var = \{\x, \y, \z, \ldots\}$ be a countably infinite set of variables. A \emph{relational atom} is an expression
$\x\rel\y$ with $\x,\y\in\Var$, and a finite set of relational
atoms is called a \emph{control}. A \emph{labelled formula} is an
expression $\x:\varphi$, where $\x\in\Var$ and $\varphi$ is a formula.
For a set $\Gamma$ of labelled formulae, let $\Var(\Gamma)$ be the set
of variables occurring as labels in $\Gamma$. Similarly, for a control $\Omega$,
let $\Var(\Omega)$ be the set of variables occurring in $\Omega$. Given a model $\fw=(\W, \R, \V)$, an \emph{assignment for $\fw$} is a map $\lambda: \Var \longrightarrow \W$. The \emph{interpretation} of labelled formulae and relational atoms is defined as follows:
\[
\begin{array}{rcl}
\fw,\lambda\Vdash\x\rel\y
&\Longleftrightarrow& \lambda(\x)\Rel\lambda(\y),\\
\fw,\lambda\Vdash\x:\varphi
&\Longleftrightarrow& \fw,\lambda(\x)\models\varphi.
\end{array}
\]
A \emph{sequent} is an ordered triple $\sigma =(\Omega,\Gamma,\Delta)$,
written $\Omega\dashv\Gamma\Rightarrow\Delta$, where $\Omega$ is a
control and $\Gamma,\Delta$ are finite sets of labelled formulae.
The pair $\Gamma\Rightarrow\Delta$ is the \emph{body}; $\Gamma$ and
$\Delta$ are its \emph{antecedent} and \emph{consequent}. %Since contexts are sets, exchange and contraction are implicit. 
For $\sigma=\Omega\dashv\Gamma\Rightarrow\Delta$, let
$\sigma_c=\Omega$, $\sigma_L=\Gamma$, $\sigma_R=\Delta$, and
$\Var(\sigma)=\Var(\Omega)\cup\Var(\Gamma)\cup\Var(\Delta)$.
\begin{definition}
A sequent $\sigma$ is \emph{true in $\fw$ under $\lambda$}, written
$\fw,\lambda\Vdash\sigma$, if $\fw,\lambda\Vdash\x \rel \y$ for every $\x \rel \y \in \sigma_c$ and $\fw,\lambda\Vdash \x: \varphi$ for every $\x: \varphi\in\sigma_L$ imply that $\fw,\lambda\Vdash \y: \psi$ for some $\y: \psi \in\sigma_R$. It is \emph{valid over} a frame class $\Fr$ if $\sigma$ is true in every
model based on a frame in $\Fr$ under every assignment; otherwise $\sigma$ is
\emph{falsifiable over $\Fr$}.
\end{definition}

\begin{definition}
Let $\gpc$ be a finite set of GPCs. The calculus $\KW_\gpc$
consists of the initial sequents and rules displayed in
Tables~\ref{tab:rules1} and~\ref{tab:rules2}.
\end{definition}
\begin{table}[t]
    \centering
    \small
    \begin{tabular}{|c c|}
    \hline
    & \\
      $\infer[\bot]{\Omega \dashv \Gamma, \x: \bot \Rightarrow \Delta}{}$ & $\infer[\mathsf{id}]{\Omega \dashv \Gamma, \x: \varphi \Rightarrow \x: \varphi, \Delta}{}$ \\

      & \\
      
      $\infer[\mathsf{{\to} L}]{\Omega \dashv \Gamma, \x: \varphi \to \psi \Rightarrow \Delta}{\Omega \dashv \Gamma \Rightarrow \x: \varphi, \Delta & \Omega \dashv \Gamma, \x: \psi \Rightarrow \Delta}$ & $\infer[\mathsf{{\to}R}]{\Omega \dashv \Gamma \Rightarrow \x: \varphi \to \psi, \Delta}{\Omega \dashv \Gamma, \x: \varphi \Rightarrow \x: \psi, \Delta}$ \\
      & \\
      \hline
    \end{tabular}
    \caption{Initial sequents and implication rules of $\KW_\gpc$.}
    \label{tab:rules1}
\end{table}

The implication rules and the initial sequents ($\bot$ and $\mathsf{id}$) are independent of the
control. The remaining rules govern the modality and the frame
conditions. Read bottom-up, the rules preserve falsifiability, and so $\K\mathsf L$ expresses that if $\x:\K\varphi$ is true in a model $\fw$ under an assignment $\lambda$, then $\varphi$ has one uniform truth value at all successors of $\lambda(\x)$. The two
premises represent these two possibilities. For $\K\mathsf R$, if $\x:\K\varphi$ is false in $\fw$ under $\lambda$, the rule introduces two distinct fresh variables, one representing a successor where $\varphi$ is true and the other where it is false. All rules apart from $\mathsf{r}_\gpc$ are called \emph{logical rules}. For each logical rule, the formula to which the rule is applied in the conclusion is called \emph{principal}.
The formulae introduced in the premises by decomposing the
principal formula are called its \emph{residuals}. All other
formulae are called \emph{side formulae}. The rule $\mathsf r_\gpc$ is called a \emph{structural rule} and replaces the relation encoded by the control with its least $\gpc$-closed extension on the fixed label domain of the conclusion. The rule $\mathsf{r}_\gpc$ has no principal formula; every formula
occurrence in an instance of this rule is a side formula. The formal definition of $\mathsf{r}_\gpc$ is given as follows.

\begin{definition}
Let $\sigma=\Omega\dashv\Gamma\Rightarrow\Delta$ be a sequent.  The
\emph{induced frame} $\F_\sigma=(\W,\R)$ of $\sigma$ is defined as
follows.
\begin{itemize}
  \item $\W\coloneqq\Var(\sigma)$;
  \item $\x\Rel\y$ if and only if $\x\rel\y\in\Omega$.
\end{itemize}
\end{definition}
Strictly speaking, if $\Var(\sigma)=\emptyset$, then $\F_\sigma$ is
an empty relational graph rather than a frame; nevertheless, all relational constructions below remain
well-defined in this degenerate case. The proof of the following lemma is immediate and omitted.
\begin{lemma}\label{lem: induced-frame}
If $\Var(\sigma)\neq\emptyset$, then the induced frame $\F_\sigma$
is a frame.
\end{lemma}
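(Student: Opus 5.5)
This is a direct check against the definition of a frame, which requires a nonempty set $\W$ of worlds and a relation $\R\subseteq\W\times\W$. I verify the two requirements separately, using only the definition of the induced frame $\F_\sigma$ for $\sigma=\Omega\dashv\Gamma\Rightarrow\Delta$.

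Nonemptiness is the hypothesis: $\W=\Var(\sigma)\neq\emptyset$ by assumption.

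The substantive step, and the only place an argument is needed, is that $\R$ relates only elements of $\W$. Suppose $\x\Rel\y$. By definition of the induced frame, $\x\rel\y\in\Omega$, so $\x,\y\in\Var(\Omega)$. Since $\Var(\sigma)=\Var(\Omega)\cup\Var(\Gamma)\cup\Var(\Delta)$, we get $\Var(\Omega)\subseteq\Var(\sigma)=\W$, and hence $(\x,\y)\in\W\times\W$. Therefore $\R\subseteq\W\times\W$.

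Since $\Omega$ is a finite control, $\W$ and $\R$ are also finite, although the definition of a frame does not require this. Both conditions hold, so $\F_\sigma=(\W,\R)$ is a frame. I expect no real obstacle. The one point to check is that $\Var(\sigma)$ includes the variables of the control and not only the labels in $\Gamma\cup\Delta$, and this holds by the definition of $\Var(\sigma)$.
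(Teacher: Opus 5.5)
Your proof is correct and is the routine verification the paper has in mind when it calls this lemma immediate and omits its proof. Nonemptiness is the hypothesis $\Var(\sigma)\neq\emptyset$, and $\R\subseteq\W\times\W$ holds because $\Var(\Omega)\subseteq\Var(\sigma)$.
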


\begin{table}[t]
    \centering
    \small
    \begin{tabular}{|c|}
    \hline
    \\
      $\infer[\mathsf{{\K} L}]{\Omega \dashv \Gamma, \x: \K \varphi \Rightarrow \Delta}{\Omega \dashv \Gamma, \x: \K \varphi, \{\y: \varphi \mid \x \rel \y \in \Omega\} \Rightarrow \Delta & \Omega \dashv  \Gamma, \x: \K \varphi \Rightarrow \{\y: \varphi \mid \x \rel \y \in \Omega\}, \Delta}$ \\

       \\
    
     $\infer[\mathsf{{\K}R}]
{\Omega \dashv \Gamma \Rightarrow \x:\K\varphi,\Delta}
{\Omega,\x\rel\y,\x\rel\z
 \dashv
 \Gamma,\y:\varphi
 \Rightarrow
 \z:\varphi,\x:\K\varphi,\Delta}$
\quad
 \scriptsize $\left(
  \y\neq\z, \{\y,\z\} \cap
  \Var(
    \Omega\dashv\Gamma
    \Rightarrow\x:\K\varphi,\Delta
  ) = \emptyset
\right)$ \\
       \\
    $\infer[\mathsf{r}_\gpc]{\Omega \dashv \Gamma \Rightarrow \Delta}{\mathcal{S}(\gpc)(\Omega) \dashv \Gamma \Rightarrow \Delta}$\\
    \\
      \hline
    \end{tabular}
    \caption{Rules for $\K$ and GPCs.}
    \label{tab:rules2}
\end{table}

\begin{definition}
Let $\sigma$ be a sequent with induced frame
$\F_\sigma=(\W,\R)$, and let $\mathcal S$ be a $\Sigma$-system.
The \emph{closure} of $\F_\sigma$ under $\mathcal S$ is given as $C_{\mathcal S}(\F_\sigma) \coloneqq \bigl(\W,C_{\mathcal S}(\R)\bigr)$, where
\[
  C_{\mathcal S}(\R)
  \coloneqq
  \bigcup_{\wt\in\mathcal L_{\mathcal S}(\xt)}
  \R_\wt.
\]
\end{definition}

\begin{lemma}\label{lem: closure-G-frame}
Let $\gpc$ be a finite set of GPCs, let
$\mathcal S=\mathcal S(\gpc)$, and let $\sigma$ be a sequent such
that $\Var(\sigma)\neq\emptyset$.  Then
$C_{\mathcal S}(\F_\sigma)$ is a $\gpc$-frame.  Moreover, the
closure is idempotent:
\[
  C_{\mathcal S}\bigl(C_{\mathcal S}(\F_\sigma)\bigr)
  =
  C_{\mathcal S}(\F_\sigma).
\]
\end{lemma}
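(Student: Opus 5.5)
Write $\R^*\coloneqq C_{\mathcal S}(\R)$, where $\mathcal S=\mathcal S(\gpc)$. The plan rests on one composition property: for every word $\beta\in\Sigma^*$,
\[
  (\R^*)_\beta\subseteq\bigcup_{\gamma\in\mathcal L_{\mathcal S}(\beta)}\R_\gamma .
\]
In words, walking along $\beta$ in the closed relation amounts to walking in $\R$ along some word derivable from $\beta$.

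I will prove this by induction on the length of $\beta$.
\begin{itemize}
\item For $\beta=\epsilon$ both sides are the identity, since $\epsilon\in\mathcal L_{\mathcal S}(\epsilon)$.
\item For $\beta=\beta'\xt$: a pair in $(\R^*)_\beta$ factors as a $(\R^*)_{\beta'}$-step followed by an $\R^*$-step. The first step lies in some $\R_{\gamma'}$ with $\beta'\mapsto^*\gamma'$, by the induction hypothesis. The second lies in some $\R_{\delta}$ with $\xt\mapsto^*\delta$, by the definition of $C_{\mathcal S}$. By Lemma~\ref{lem:word-relations}(1) the pair lies in $\R_{\gamma'\delta}$. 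Derivations in a semi-Thue system are closed under concatenation of contexts, so $\beta'\xt\mapsto^*\gamma'\xt\mapsto^*\gamma'\delta$.
\item For $\beta=\beta'\xti$: the second step lies in $(\R^*)^{-1}$. By Lemma~\ref{lem:word-relations}(2) this is a subset of $\R_{\delta^{-1}}$ for some $\delta$ with $\xt\mapsto^*\delta$. The converse property \eqref{eq:converse-derivation} then gives $\xti\mapsto^*\delta^{-1}$, and the argument finishes as before.
\end{itemize}
This converse case is the only place where the pairing of productions in $\mathcal S_\alpha$ is essential, and I expect it to be the main (though modest) obstacle.

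To show that $C_{\mathcal S}(\F_\sigma)$ is a $\gpc$-frame, I use Lemma~\ref{lem:gpc-finite-basis}: it suffices to check $(\R^*)_\alpha\subseteq\R^*$ for each $\gpc(\alpha)\in\gpc$. By the composition property, $(\R^*)_\alpha\subseteq\bigcup_{\alpha\mapsto^*\gamma}\R_\gamma$. Since $\xt\mapsto\alpha$ is a production of $\mathcal S$, every such $\gamma$ satisfies $\xt\mapsto^*\gamma$. Hence each $\R_\gamma\subseteq\R^*$. The domain $\W=\Var(\sigma)$ is nonempty, so by Lemma~\ref{lem: induced-frame} this is a genuine frame.

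For idempotence there are two inclusions.
\begin{itemize}
\item $\R^*\subseteq C_{\mathcal S}(\R^*)$ holds because $\xt\in\mathcal L_{\mathcal S}(\xt)$, via the zero-length derivation.
\item For the converse, take $\wt\in\mathcal L_{\mathcal S}(\xt)$. The composition property gives $(\R^*)_\wt\subseteq\bigcup_{\wt\mapsto^*\gamma}\R_\gamma$. By transitivity of $\mapsto^*$, each such $\gamma$ lies in $\mathcal L_{\mathcal S}(\xt)$, so $(\R^*)_\wt\subseteq\R^*$.
\end{itemize}
Taking the union over $\wt$ yields $C_{\mathcal S}(\R^*)\subseteq\R^*$, which completes the proof.
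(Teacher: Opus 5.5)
Your proposal is correct and follows essentially the same route as the paper's sketch. Your composition property $(\R^*)_\beta\subseteq\bigcup_{\gamma\in\mathcal L_{\mathcal S}(\beta)}\R_\gamma$ says exactly that each edge of a path in the closed relation expands into a (possibly empty) path in the original relation, with \eqref{eq:converse-derivation} handling backward edges; both the $\gpc$-frame claim and idempotence then follow by composing derivations, as the paper indicates.
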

% \begin{proof}
%     See Appendix.
% \end{proof}
\begin{proof}[Proof sketch]
Each edge of a path in the closed relation can be expanded into a possibly empty path in the original relation; converse productions handle backward edges. Composing the corresponding grammar derivations shows that the closure satisfies the GPCs and that closing it again adds no edges. Full details are given in the arXiv version.
\end{proof}

Finally, we define the rule $\mathsf r_\gpc$ as follows.

\begin{definition}
Let $\sigma=\Omega\dashv\Gamma\Rightarrow\Delta$ be a sequent and $\gpc$ be a finite set of GPCs. The set
$\mathcal S(\gpc)(\Omega)$ is defined as follows. For all $\x,\y\in\Var$:
\begin{center}
  $\x\rel\y\in\mathcal S(\gpc)(\Omega)$ if and only if
  $\x\mathrel{C_{\mathcal S(\gpc)}(\R)}\y$ in
  $C_{\mathcal S(\gpc)}(\F_\sigma)$.
\end{center}
\end{definition}

Read bottom-up, the rule $\mathsf r_\gpc$ replaces the control
$\Omega$ of a sequent $\Omega\dashv\Gamma\Rightarrow\Delta$ by
$\mathcal S(\gpc)(\Omega)$, thereby closing it under all GPCs in
$\gpc$ in one step.

\begin{example}
(1) If $\gpc=\emptyset$, then $\mathcal L_{\mathcal S(\gpc)}(\xt)=\{\xt\}$,
% \[
%   \mathcal L_{\mathcal S(\gpc)}(\xt)=\{\xt\},
% \]
and hence $C_{\mathcal S(\gpc)}(\F_\sigma)=\F_\sigma$. (2) If $\gpc=\{\gpc(\epsilon)\}$, then $\mathcal L_{\mathcal S(\gpc)}(\xt)
  =
  \{\xt,\epsilon\}$,
% \[
%   \mathcal L_{\mathcal S(\gpc)}(\xt)
%   =
%   \{\xt,\epsilon\},
% \]
so $C_{\mathcal S(\gpc)}(\R)
  =
  \R_{\xt}\cup\R_\epsilon$.
% \[
%   C_{\mathcal S(\gpc)}(\R)
%   =
%   \R_{\xt}\cup\R_\epsilon.
% \]
Thus, read bottom-up, the rule $\mathsf r_\gpc$ does not alter the control in case (1) and adds
$\x\rel\x$ to the control for every $\x\in\Var(\sigma)$ in case (2). For a specific set $\gpc$ of GPCs, we can therefore replace $\mathsf r_\gpc$ by its specific instantiation, which simplifies the sequent calculus while it retains all properties proven for the general case. For example for (2) $\mathsf r_\gpc$ is equivalently
presented as
\begin{prooftree}
  \AxiomC{$\Omega,\{\,\x\rel\x\,\}_{\x\in\Var(\Omega\dashv\Gamma\Rightarrow\Delta)}
    \dashv\Gamma\Rightarrow\Delta$}
  \RightLabel{$\mathsf{refl}$}
  \UnaryInfC{$\Omega\dashv\Gamma\Rightarrow\Delta$}
\end{prooftree} 
\end{example}

\begin{definition}
Let $\gpc$ be a finite set of GPCs.  A \emph{proof} of a sequent
$\sigma$ in $\KW_\gpc$ is a finite tree $\pi$ labelled by sequents
according to the rules of $\KW_\gpc$ such that the root is labelled
by $\sigma$ and every leaf is labelled by an initial sequent.  We write
$\KW_\gpc\vdash\sigma$ if $\sigma$ has a $\KW_\gpc$-proof.
\end{definition}

We finish the section by proving soundness of $\KW_\gpc$.  First,
we show that all rules preserve validity over the respective
classes of frames.

\begin{lemma}\label{lem: local-soundness1}
Let $\gpc$ be a finite set of GPCs.  The rule
$\mathsf r_\gpc$ preserves validity over the class of
$\gpc$-frames.
\end{lemma}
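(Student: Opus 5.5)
We argue contrapositively, following the bottom-up reading of the rules as preserving falsifiability. Suppose the conclusion $\sigma=\Omega\dashv\Gamma\Rightarrow\Delta$ of an instance of $\mathsf r_\gpc$ is falsifiable over $\Fr_\gpc$. Then there are a model $\fw=(\W,\R,\V)$ on a $\gpc$-frame and an assignment $\lambda$ such that every relational atom of $\Omega$ and every formula of $\Gamma$ holds under $\lambda$, while no formula of $\Delta$ does. The body of the premise coincides with that of $\sigma$. Hence it suffices to show that $\fw,\lambda\Vdash\x\rel\y$ for every $\x\rel\y\in\mathcal S(\gpc)(\Omega)$; the same pair $(\fw,\lambda)$ then falsifies the premise.

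Let $\F_\sigma=(\W_\sigma,\R_\sigma)$ be the induced frame. We may assume $\Var(\sigma)\neq\emptyset$, since otherwise $\Omega$ and its closure are empty and there is nothing to show. Because $\Omega\subseteq\Var(\sigma)^2$, we have $\R_\sigma\subseteq\Var(\sigma)\times\Var(\sigma)$. Define $f\coloneqq\lambda\restriction\Var(\sigma)$ and consider the image relation
\[
f(\R_\sigma)\coloneqq\{(\lambda(\x),\lambda(\y))\mid \x\rel\y\in\Omega\}.
\]
By the hypothesis on $(\fw,\lambda)$, $f(\R_\sigma)\subseteq\R$. The key step is a homomorphism transfer along words: by induction on the length of $\beta\in\Sigma^*$, if $(\x,\y)\in(\R_\sigma)_\beta$, then $(\lambda(\x),\lambda(\y))\in\R_\beta$. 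The step for a letter $z$ uses only that $\R_\sigma$-edges map to $\R$-edges, and hence that $\R_\sigma^{-1}$-edges map to $\R^{-1}$-edges. The case $\beta=\epsilon$ is trivial, since the intermediate points on a path stay inside $\Var(\sigma)$. In other words, the relations $(\R_\sigma)_\beta$ computed on the domain $\Var(\sigma)$ are sent by $\lambda$ into $\R_\beta$.

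Now take $\x\rel\y\in\mathcal S(\gpc)(\Omega)$. By definition, $(\x,\y)\in(\R_\sigma)_\beta$ for some $\beta\in\mathcal L_{\mathcal S(\gpc)}(\xt)$, so by the transfer step $(\lambda(\x),\lambda(\y))\in\R_\beta$. It remains to show $\R_\beta\subseteq\R$, using that $(\W,\R)$ is a $\gpc$-frame. By Lemma~\ref{lem:gpc-finite-basis}, we have $\R_\alpha\subseteq\R$ for each $\gpc(\alpha)\in\gpc$, and by Lemma~\ref{lem:word-relations}(2) also $\R_{\alpha^{-1}}\subseteq\R_{\xti}$. Thus every production of $\mathcal S(\gpc)$, not just of a single $\mathcal S_\alpha$, replaces a letter by a word whose relation is contained in that letter's relation. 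Lemma~\ref{lem:word-relations}(4), applied by induction on derivation length, then gives $\R_\beta\subseteq\R_{\xt}=\R$. This yields $\fw,\lambda\Vdash\x\rel\y$.

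I expect the main obstacle to be this last step. The definition of $\gpc$-frame only guarantees closure under each $\mathcal S_\alpha$ separately, whereas $\mathsf r_\gpc$ uses derivations in the union $\mathcal S(\gpc)$, which may interleave productions from different GPCs. The argument above handles this by proving the one-step inclusion for an arbitrary production of the union, so the induction in Lemma~\ref{lem:gpc-finite-basis} goes through unchanged. A secondary point to check is that the word relations on $\F_\sigma$ are taken over the domain $\Var(\sigma)$ while those in $\fw$ are taken over $\W$. This only affects $\beta=\epsilon$, where it is harmless, as noted in the transfer step.
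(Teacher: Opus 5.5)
Your proof is correct and is essentially the paper's argument: you falsify the conclusion, transfer $\Omega$-paths along $\lambda$ to paths with the same word in $\fw$, and then obtain $\R_\beta\subseteq\R$ for $\beta\in\mathcal L_{\mathcal S(\gpc)}(\xt)$ from the finite-basis lemma plus induction on $\mathcal S(\gpc)$-derivations via Lemma~\ref{lem:word-relations}(4). You also make explicit two points the paper leaves implicit: the word-by-word homomorphism transfer, and the fact that every production of the union $\mathcal S(\gpc)$, not just those of a single $\mathcal S_\alpha$, shrinks word relations, so interleaved derivations are handled.
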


\begin{proof}
We argue contrapositively.  Suppose that
$\sigma=\Omega\dashv\Gamma\Rightarrow\Delta$ is falsified in a
model $\fw=(\W,\R,\V)$ based on a $\gpc$-frame, under an assignment
$\lambda$.  Then $\lambda$ satisfies $\Omega$ and $\Gamma$ and
falsifies $\Delta$.  Since $\lambda$ satisfies $\Omega$, it maps
every path in $\F_\sigma$ to a path with the same label in the
frame of $\fw$.  Thus, if
$\x\rel\y\in\mathcal S(\gpc)(\Omega)$, then $\lambda(\x)\mathrel{\R_\beta}\lambda(\y)$ for some
  $\beta\in\mathcal L_{\mathcal S(\gpc)}(\xt)$.
As $\fw$ is based on a $\gpc$-frame, the finite-basis lemma and
induction on $\mathcal S(\gpc)$-derivations give
$\R_\beta\subseteq\R$.  Hence $\lambda$ satisfies
$\mathcal S(\gpc)(\Omega)$.  Since the body is unchanged, the same
model and assignment falsify the premise.
\end{proof}

\begin{lemma}\label{lem: local-soundness2}
All logical rules preserve validity over the class of all frames.
\end{lemma}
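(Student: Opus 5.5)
We argue contrapositively and rule by rule, as in the proof of Lemma~\ref{lem: local-soundness1}: for each logical rule we assume the conclusion is falsified in some model $\fw=(\W,\R,\V)$ under some assignment $\lambda$, and we produce a premise that is falsified in the same model, possibly under a modified assignment. No frame condition is used anywhere, so the argument works over the class of all frames and hence over every $\Fr_\gpc$. The initial sequents $\bot$ and $\mathsf{id}$ are zero-premise rules; we only need to check that they are valid, which is immediate since no assignment can satisfy $\x:\bot$, or satisfy $\x:\varphi$ while falsifying $\x:\varphi$.

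For $\mathsf{{\to}L}$ and $\mathsf{{\to}R}$ the argument is the usual classical one, keeping the control, $\lambda$ and the model unchanged. If $\lambda(\x)\models\varphi\to\psi$, then either $\lambda(\x)\not\models\varphi$, which falsifies the left premise, or $\lambda(\x)\models\psi$, which falsifies the right premise. If $\lambda(\x)\not\models\varphi\to\psi$, then $\lambda(\x)\models\varphi$ and $\lambda(\x)\not\models\psi$, which falsifies the unique premise.

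For $\mathsf{{\K}L}$, suppose $\lambda$ satisfies $\Omega$ and $\Gamma$, falsifies $\Delta$, and $\fw,\lambda(\x)\models\K\varphi$. By the truth clause, either every $\R$-successor of $\lambda(\x)$ satisfies $\varphi$, or every one falsifies it. Since $\lambda$ satisfies $\Omega$, each $\x\rel\y\in\Omega$ gives $\lambda(\x)\Rel\lambda(\y)$. In the first case, every $\y:\varphi$ with $\x\rel\y\in\Omega$ holds, so the left premise is falsified. In the second case, every such $\y:\varphi$ fails, and since $\Delta$ is also falsified, the right premise is falsified. The repeated $\x:\K\varphi$ in the premises is harmless because it is already true.

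The only step that needs real care is $\mathsf{{\K}R}$, since there the assignment must be changed. Suppose $\lambda$ falsifies the conclusion, so $\lambda(\x)\not\models\K\varphi$. Then there are successors $v,u$ of $\lambda(\x)$ with $v\models\varphi$ and $u\not\models\varphi$; they are automatically distinct, although this is not needed. Define $\lambda'$ to agree with $\lambda$ except that $\lambda'(\y)=v$ and $\lambda'(\z)=u$. This is well defined because $\y\neq\z$. By the eigenvariable condition, neither $\y$ nor $\z$ occurs in $\Omega,\Gamma,\Delta$ or is equal to $\x$, so $\lambda'$ agrees with $\lambda$ on every label of the conclusion. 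Hence $\lambda'$ still satisfies $\Omega$ and $\Gamma$ and falsifies $\x:\K\varphi$ and $\Delta$. It also satisfies $\x\rel\y$, $\x\rel\z$ and $\y:\varphi$, and it falsifies $\z:\varphi$, so the premise is falsified. This completes all cases.
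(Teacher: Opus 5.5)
Your proof is correct and follows the paper's own sketched argument. It argues contrapositively that falsifiability is preserved, keeping the model and assignment fixed for the implication rules and $\K\mathsf L$, and for $\K\mathsf R$ it uses the eigenvariable condition to reassign the two fresh labels to a successor of $\lambda(\x)$ where $\varphi$ holds and one where it fails. Keep your remark that the falsified premise lives in the same model, hence on the same frame: the soundness theorem needs exactly this stronger form to apply the lemma over $\gpc$-frames.
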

% \begin{proof}
%     See Appendix.
% \end{proof}
\begin{proof}[Proof sketch]
The propositional cases are standard, and the modal cases follow by the countermodel arguments described above. Full details are given in the arXiv version.
\end{proof}

\begin{theorem}[Soundness]
Let $\gpc$ be a finite set of GPCs and let $\sigma$ be a sequent. If
${\KW_\gpc\vdash\sigma}$, then $\sigma$ is valid over the class of $\gpc$-frames.
\end{theorem}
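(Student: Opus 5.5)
The plan is to argue by induction on the height of a $\KW_\gpc$-proof $\pi$ of $\sigma$, showing that every sequent labelling a node of $\pi$ is valid over $\Fr_\gpc$.

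\emph{Base case.} For leaves it suffices to check that the initial sequents are valid over every frame, in particular over every $\gpc$-frame.
\begin{itemize}
\item For $\bot$: no assignment satisfies $\x:\bot$, so the antecedent is never fully satisfied and the sequent holds vacuously.
\item For $\mathsf{id}$: any model and assignment satisfying $\x:\varphi$ in the antecedent also satisfy the consequent formula $\x:\varphi$.
\end{itemize}
Neither check depends on the control, so both hold under every assignment in every model.

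\emph{Inductive step.} Suppose the last rule applied in $\pi$ has premises $\sigma_1,\dots,\sigma_n$. By the induction hypothesis, each $\sigma_i$ has a proof of smaller height and is therefore valid over $\Fr_\gpc$. We then split on the rule.
\begin{itemize}
\item If it is a logical rule, Lemma~\ref{lem: local-soundness2} says the rule preserves validity over the class of all frames. We must check that this gives preservation over the subclass $\Fr_\gpc$, not only over all frames at once. Since Lemma~\ref{lem: local-soundness2} is argued model-by-model via countermodels, we read it in that pointwise form. A falsifying model and assignment for the conclusion yield a falsifying model and assignment for some premise. For $\K\mathsf R$ the assignment is modified on the fresh variables $\y,\z$, but the underlying model, and hence its frame, is unchanged. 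So a countermodel for the conclusion based on a $\gpc$-frame yields one for a premise on the same $\gpc$-frame, contradicting the validity of that premise.
\item If it is $\mathsf r_\gpc$, Lemma~\ref{lem: local-soundness1} applies directly.
\end{itemize}

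The only delicate point is the restriction issue in the logical-rule case. I would handle it by making explicit that the proof of Lemma~\ref{lem: local-soundness2} is frame-preserving, as described above: the countermodel for the premise lives on the same frame as the countermodel for the conclusion.

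For $\K\mathsf R$ one also needs the countermodel for the conclusion to supply the witnesses. Falsity of $\x:\K\varphi$ at $\lambda(\x)$ gives successors $v$ with $\varphi$ true and $u$ with $\varphi$ false. We send $\y\mapsto v$ and $\z\mapsto u$. This is possible because $\y\ne\z$ and both are fresh, so the change affects no other formula or relational atom of the conclusion.

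With the frame kept fixed in every case, the induction closes, and the root $\sigma$ is valid over $\Fr_\gpc$.
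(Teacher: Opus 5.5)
Your proof is correct and follows the paper's argument: induction on proof height, with the initial sequents checked directly and the inductive step discharged by the two local-soundness lemmas. Your remark that Lemma~\ref{lem: local-soundness2}, which is stated for the class of all frames, has to be used in its pointwise, frame-preserving form is a real point of rigor that the paper's one-line proof leaves implicit: the countermodel for a premise lives on the same frame as the one for the conclusion, and $\K\mathsf R$ only reassigns the fresh $\y,\z$.
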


\begin{proof}
By induction on the height of a proof.  The initial sequents are
valid, and every induction step follows from the two local-soundness
lemmas above.
\end{proof}

\section{Completeness}\label{sec: completeness}

\noindent In this section we prove completeness of $\KW_\gpc$ for every finite
set of GPCs $\gpc$. The proof proceeds by a fair proof-search
argument. We first introduce a suitable notion of \emph{saturated sequent}.

\begin{definition}\label{def:gpc-saturation}
Let $\sigma=\Omega\dashv\Gamma\Rightarrow\Delta$ be a sequent.
The control $\Omega$ is \emph{$\gpc$-saturated in $\sigma$} if $\Omega=\mathcal S(\gpc)(\Omega)$,
where the right-hand side is evaluated in the induced frame $\F_\sigma$, and hence over the label domain $\Var(\sigma)$.
\end{definition}

If $\Var(\sigma)=\emptyset$, idempotence is immediate. Otherwise it
follows from Lemma~\ref{lem: closure-G-frame} that closure is
idempotent on the fixed label domain $\Var(\sigma)$. Since an
application of $\mathsf r_\gpc$ changes neither the body nor the label
set of a sequent, the control of its premise is $\gpc$-saturated in
that premise. Subsequent rule applications may introduce fresh labels,
in which case $\mathsf{r}_{\gpc}$ may have to be applied again.

\begin{definition}\label{def:formula-saturation}
Let $\sigma=\Omega\dashv\Gamma\Rightarrow\Delta$ be a sequent. An
occurrence of a labelled formula $\x: \varphi$ in $\Gamma$ or $\Delta$ is
\emph{formula-saturated in $\sigma$} if $\varphi = p \in \Prop$, or $\varphi = \bot$, or if $\x: \varphi$ satisfies the corresponding clause below:
\begin{enumerate}
  \item if $\x:\varphi\to\psi\in\Gamma$, then
        $\x:\varphi\in\Delta$ or $\x:\psi\in\Gamma$;
  \item if $\x:\varphi\to\psi\in\Delta$, then
        $\x:\varphi\in\Gamma$ and $\x:\psi\in\Delta$;
  \item if $\x:\K\varphi\in\Gamma$, then either
        $\y:\varphi\in\Gamma$ for every $\x\rel\y\in\Omega$, or
        $\y:\varphi\in\Delta$ for every $\x\rel\y\in\Omega$;
  \item if $\x:\K\varphi\in\Delta$, then there are distinct labels
        $\y$ and $\z$ such that $\x\rel\y,\x\rel\z\in\Omega$,
        $\y:\varphi\in\Gamma$, and $\z:\varphi\in\Delta$.
\end{enumerate}
\end{definition}

\begin{definition}
Let $\gpc$ be a finite set of GPCs. A sequent
$\sigma=\Omega\dashv\Gamma\Rightarrow\Delta$ is \emph{saturated} if $\sigma$ is not an initial sequent, its control is $\gpc$-saturated in $\sigma$ and every labelled formula in its body is formula-saturated in $\sigma$.
\end{definition}

We now define proof-search trees for $\KW_\gpc$. Such a tree records
a systematic bottom-up application of rule instances, and it need not
be finite. As shown below, if such a tree is not a proof, it contains
a finite or infinite branch from which a countermodel can be
constructed.

The source of possible infinitude is the interaction between
$\K\mathsf R$ and relational closure. A $\K\mathsf R$-application
introduces fresh labels and new relational atoms; closing the enlarged control
may then create further successors of an old label, making a
persistent left occurrence of a $\K$-formula unsaturated again. It
must consequently be processed once more. Thus the construction below
is finitely branching and fair, but not necessarily terminating. In
particular, this argument does not by itself establish the finite model
property.

Because controls and formula contexts are sets, rules may be applied
in a preserving form. A rule instance is \emph{preserving} if, for
every premise
$\Omega'\dashv\Gamma'\Rightarrow\Delta'$ of a conclusion
$\Omega\dashv\Gamma\Rightarrow\Delta$, we have (1) $\Omega\subseteq\Omega'$, (2) $\Gamma\subseteq\Gamma'$ and (3) $\Delta\subseteq\Delta'$.
Logical rules are applied preservingly by treating the principal formula also as a side formula. Every instance of $\mathsf r_\gpc$ is preserving since $\mathsf{r}_{\gpc}$ does not delete relational atoms.

\begin{definition}\label{def:proof-search}
Let $\gpc$ be a finite set of GPCs and let $\sigma$ be a sequent. A
\emph{proof-search tree} for $\sigma$ in $\KW_\gpc$ is a possibly
infinite rooted tree whose nodes are labelled by sequents and which
satisfies the following conditions.
\begin{enumerate}
  \item The root is labelled by $\sigma$.
  \item Every internal node is the conclusion of a preserving rule
        instance, and its children are exactly the premises of that
        instance.
  \item The rule $\mathsf r_\gpc$ may be applied at a sequent
        $\tau$ only if its control is not $\gpc$-saturated in $\tau$.
        A logical rule may be applied only if its principal left- or
        right-hand occurrence is not formula-saturated in its
        conclusion.
  \item A node is a leaf if and only if it is labelled by an initial sequent or
        by a saturated sequent.
  \item Every branch is \emph{fair}. Unless it terminates at an axiom,
        every formula occurrence or control that is unsaturated at a
        node is eventually \emph{discharged}: a formula occurrence either
        becomes formula-saturated or, while still unsaturated, is
        principal in a rule applied at a descendant; a control either
        becomes $\gpc$-saturated or, while still unsaturated, is
        processed by $\mathsf r_\gpc$ at a descendant.
\end{enumerate}
\end{definition}

If a formula occurrence or a control later becomes unsaturated again
because new labels or relational atoms have been introduced, the
fairness condition applies to it again. Note that proof-search trees are finitely branching. By K\H{o}nig's Lemma, infinite proof-search trees therefore contain infinite branches.

Proof-search trees can be constructed by preservingly applying rules bottom-up and keeping, along each branch, a queue of the currently unsaturated formulae and, when applicable, the control, always processing the oldest outstanding entry first. Entries that have become saturated are discarded, while newly unsaturated entries are appended to the end of the queue. To process the oldest remaining entry, apply the corresponding preserving rule; for $\K\mathsf R$, choose two distinct fresh labels, which exist because the current sequent contains only finitely many labels. Each rule application produces finitely many finite premises: logical rules add only finitely many formulae and labels, while the control in the premise of $\mathsf r_\gpc$ contains at most $|\Var(\tau)|^2$ relational atoms for a conclusion $\tau$. Repeating this construction therefore yields a finitely branching tree, although some branches may be infinite. As new entries are added at the end of the queue, every outstanding requirement either becomes saturated or is eventually processed. Hence every branch is fair. This yields the following lemma.

\begin{lemma}\label{lem:proof-search}
Let $\gpc$ be a finite set of GPCs. Every sequent $\sigma$ has a
proof-search tree in $\KW_\gpc$.
\end{lemma}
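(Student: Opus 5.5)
The plan is to construct the tree as the limit of a sequence of finite partial trees $T_0\subseteq T_1\subseteq\cdots$, with a processing queue attached to each open leaf. The queue records the unsaturated entries. $T_0$ is the single node $\sigma$, and its queue lists the unsaturated formula occurrences of $\sigma$ in some fixed order, followed by the control if it is not $\gpc$-saturated. A leaf is \emph{closed} if it is an initial sequent or a saturated sequent; closed leaves are never expanded. This matches condition (4), since a non-initial sequent whose queue is empty is exactly a saturated sequent.

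To pass from $T_n$ to $T_{n+1}$, expand every open leaf $\tau$ once. First drop from the head of its queue all entries that have become saturated in $\tau$. Then apply the preserving rule instance determined by the first remaining entry:
\begin{itemize}
\item $\mathsf{{\to}L}$, $\mathsf{{\to}R}$ or $\K\mathsf L$ for a formula occurrence, with the principal formula kept;
\item $\K\mathsf R$ with two distinct labels not in $\Var(\tau)$, which exist since $\Var$ is infinite and $\Var(\tau)$ is finite;
\item $\mathsf r_\gpc$ for the control.
\end{itemize}
The instance is legal by condition (3), because the entry is unsaturated. Each premise inherits the rest of the queue. Any newly unsaturated occurrences, and the control if it has become unsaturated, are appended at the end, unless they are already pending. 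Preservation holds because each rule only adds atoms or formulae, and $\mathsf r_\gpc$ only enlarges the control since $\R=\R_{\xt}\subseteq C_{\mathcal S}(\R)$. Each step yields finitely many premises, each a finite sequent: the $\K\mathsf L$ premises add finitely many formulae because $\Omega$ is finite, and the control produced by $\mathsf r_\gpc$ has at most $|\Var(\tau)|^2$ atoms. The union of the $T_n$ then satisfies conditions (1)--(4) by construction.

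The main obstacle is fairness, condition (5), since branches may be infinite and entries may become unsaturated repeatedly. I would argue as follows. Fix a branch and a node at which some occurrence or control $e$ is unsaturated. By the appending policy, $e$ is in the queue at that node: either it was queued when it first became unsaturated, or it is still pending from earlier. Its position in the queue is a finite number $k$. Each later expansion along the branch removes at least the head entry, either by processing it or by discarding it as saturated, and new entries go only behind $e$. So within at most $k+1$ steps along the branch, one of three things happens:
\begin{itemize}
\item $e$ is discarded because it has become saturated;
\item $e$ is processed while still unsaturated, which discharges it;
\item the branch ends. An initial-sequent leaf is exempt from (5), and a saturated leaf has $e$ saturated.
\end{itemize}
If $e$ later becomes unsaturated again, it is re-appended and the same argument applies. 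This gives fairness.

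One subtlety needs checking: a queued entry must not silently become saturated and then unsaturated again before reaching the head, so that it is never re-queued. To avoid this, at every step I re-examine all occurrences in the current sequent and the control, and re-append any unsaturated one that is not already in the queue. This keeps the invariant that every unsaturated entry is queued. Since the sequent is finite, this re-examination is harmless.
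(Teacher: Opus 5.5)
Your proposal is correct and takes essentially the paper's approach: preserving rule instances are applied bottom-up along each branch, driven by an oldest-first queue of unsaturated formula occurrences and the control, with saturated entries discarded and newly unsaturated ones appended, and fairness follows because each entry's queue position strictly decreases. Your explicit re-examination step keeps every currently unsaturated entry queued. This invariant is needed because left $\K$-occurrences and the control can become unsaturated again as the control grows, and it makes precise what the paper's sketch leaves implicit rather than changing the argument.
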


We now show that every unsuccessful proof-search yields a
countermodel.

\begin{proposition}\label{prop:proof-search}
Let $\gpc$ be a finite set of GPCs, let
$\sigma=\Omega\dashv\Gamma\Rightarrow\Delta$ be a sequent, and let
$\pi$ be a proof-search tree for $\sigma$ in $\KW_\gpc$. Then either
$\pi$ is a $\KW_\gpc$-proof of $\sigma$, or $\sigma$ is falsifiable
over the class of $\gpc$-frames.
\end{proposition}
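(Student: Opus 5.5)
Suppose $\pi$ is not a $\KW_\gpc$-proof. Since every leaf of a proof-search tree is either an initial sequent or a saturated sequent, and a finite tree all of whose leaves are initial sequents is a proof, $\pi$ must contain a branch $b$ that either ends in a saturated (hence non-initial) leaf or is infinite (using König's Lemma, as noted after Definition~\ref{def:proof-search}). Along $b$ let $\sigma_0=\sigma,\sigma_1,\dots$ be the sequents, and since all rule instances are preserving, define the limit sequent $\Omega^\ast=\bigcup_i(\sigma_i)_c$, $\Gamma^\ast=\bigcup_i(\sigma_i)_L$, $\Delta^\ast=\bigcup_i(\sigma_i)_R$. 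First observation: $\Gamma^\ast\cap\Delta^\ast$ contains no common labelled formula and $\x:\bot\notin\Gamma^\ast$, since otherwise some finite stage would already be an initial sequent (by preservation both occurrences appear at some common stage $\sigma_i$), contradicting that $b$ does not terminate at an axiom.

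Next I build the countermodel $\fw=(\W,\R,\V)$ with $\W=\Var(\Omega^\ast\dashv\Gamma^\ast\Rightarrow\Delta^\ast)$ (nonempty; if $\sigma$ has no labels the claim is handled by adding a dummy world, since then $\sigma$ has empty body and control and is trivially falsifiable only if nonempty — I will just note $\Gamma,\Delta$ empty gives an unprovable, falsifiable sequent), $\R=\Omega^\ast$, $\V(\x)=\{p\mid \x:p\in\Gamma^\ast\}$, and $\lambda$ the identity on $\W$ (arbitrary elsewhere). Frame condition: by Lemma~\ref{lem:gpc-finite-basis} it suffices that $\R_\alpha\subseteq\R$ for each $\gpc(\alpha)\in\gpc$. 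A path witnessing $\x\mathrel{\R_\alpha}\y$ uses finitely many atoms, all present in some $\sigma_i$; by fairness the control is later either saturated or processed by $\mathsf r_\gpc$ at some $\sigma_j$, $j\ge i$, and in either case $\x\rel\y\in(\sigma_{j'})_c\subseteq\Omega^\ast$ for the relevant $j'$, because $\alpha\in\mathcal L_{\mathcal S(\gpc)}(\xt)$.

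Then a truth lemma: by induction on $\varphi$, $\x:\varphi\in\Gamma^\ast$ implies $\fw,\x\models\varphi$ and $\x:\varphi\in\Delta^\ast$ implies $\fw,\x\not\models\varphi$. Atoms and $\bot$ follow from the first observation. For the other cases I first show the limit is formula-saturated in the sense of Definition~\ref{def:formula-saturation} relative to $\Omega^\ast$: implications and $\K$ on the right, once saturated at a stage, stay saturated by preservation, and fairness guarantees they get saturated. Then the induction hypothesis gives the cases; for $\x:\K\varphi\in\Delta^\ast$ the two successors $\y,\z$ witness contingency.

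The main obstacle is the left $\K$ case, because saturation there is not monotone: new successors of $\x$ may be added later via $\K\mathsf R$ or $\mathsf r_\gpc$. I would argue as follows. Since $\Gamma^\ast\cap\Delta^\ast$ is disjoint of shared formulae, the two options in clause 3 are mutually exclusive as soon as $\x$ has a successor, and each application of $\K\mathsf L$ places all current successors on one side. Take any two successors $\y,\y'$ of $\x$ in $\Omega^\ast$; they appear at some stage $\sigma_i$. By fairness, at some later stage the occurrence of $\x:\K\varphi$ is saturated with respect to a control containing both $\x\rel\y,\x\rel\y'$, so $\y:\varphi$ and $\y':\varphi$ land on the same side; by the induction hypothesis and disjointness, $\varphi$ has the same truth value at every successor of $\x$, so $\fw,\x\models\K\varphi$. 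Finally the root sequent: $\Omega\subseteq\R$, $\Gamma\subseteq\Gamma^\ast$ are true and $\Delta\subseteq\Delta^\ast$ false under $\lambda$, so $\sigma$ is falsified in a $\gpc$-frame.
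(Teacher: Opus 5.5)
Your proposal is correct and follows essentially the same route as the paper: a bad branch (a finite one ending in a saturated leaf, or an infinite one obtained via K\H{o}nig's Lemma), the union sequent, a countermodel whose relation is the union of the controls, the frame condition via Lemma~\ref{lem:gpc-finite-basis} plus fairness or saturation of the control, and a truth lemma whose left-$\K$ case compares pairs of successors at a stage where the $\K$-occurrence is saturated. The differences are cosmetic. The paper adjoins an isolated reflexive point $\star$ so the domain is nonempty and the assignment total, where you instead split off the empty sequent (your wording there is garbled, but the conclusion is right). The paper also fixes one successor $\y_0$ and uses disjointness, whereas you compare truth values pairwise, which does not actually need disjointness.
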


\begin{proof}
Suppose $\pi$ is not a proof. Then $\pi$ must contain a `bad' branch: either a finite branch which ends in a saturated leaf or an infinite branch. Thus let $\rho=(\sigma_i)_{i<\ell}$ be a finite or infinite bad branch, where $\ell\in\mathbb N^+\cup\{\omega\}$, and let
$\sigma_i=\Omega_i\dashv\Gamma_i\Rightarrow\Delta_i$. We define
\[
  X_\rho=\bigcup_{i<\ell}\Var(\sigma_i),\qquad
  \Omega_\rho=\bigcup_{i<\ell}\Omega_i,\qquad
  \Gamma_\rho=\bigcup_{i<\ell}\Gamma_i,\qquad
  \Delta_\rho=\bigcup_{i<\ell}\Delta_i.
\]
Along $\rho$, preservation gives
$\Omega_i\subseteq\Omega_j$, $\Gamma_i\subseteq\Gamma_j$, and
$\Delta_i\subseteq\Delta_j$ whenever $i\leq j$.
Since $\rho$ contains no initial sequent, we also have
\begin{equation}\label{eq: completeness}
\x:\bot\notin\Gamma_\rho \text{ for every label } \x \text{ and }
\Gamma_\rho\cap\Delta_\rho=\emptyset.
\end{equation}

Indeed, a left-hand bottom
formula would already give a $\bot$-initial sequent, while persistence would
place any formula in both unions at a common node, giving an
$\mathsf{id}$-initial sequent.

We define a countermodel for $\sigma$ as follows. Choose $\star\notin X_\rho$ and let
\[
  \W_\rho=X_\rho\mathbin{\cup}\{\star\},\qquad
  \R_\rho=
  \{(\x,\y)\in X_\rho^2\mid \x\rel\y\in\Omega_\rho\}
  \cup\{(\star,\star)\}.
\]
For $\x\in X_\rho$, let
\[
  \V_\rho(\x)=\{p\in\Prop\mid \x:p\in\Gamma_\rho\},
  \qquad \V_\rho(\star)=\emptyset,
\]
and define $\lambda_\rho:\Var\to\W_\rho$ by
$\lambda_\rho(\x)=\x$ if $\x\in X_\rho$, and
$\lambda_\rho(\x)=\star$ otherwise. Finally, set
$\fw_\rho=(\W_\rho,\R_\rho,\V_\rho)$.

The point $\star$ ensures that $\W_\rho$ is non-empty and $\lambda_\rho$ total.
It is an isolated reflexive component; hence, once the relation on
$X_\rho$ is shown below to satisfy $\gpc$, adjoining $\star$ preserves
the $\gpc$-frame condition.

We now verify the frame condition. If $X_\rho=\emptyset$, the model
consists only of the reflexive point $\star$, so the claim is
immediate. Suppose $X_\rho\neq\emptyset$, fix
$\gpc(\alpha)\in\gpc$, and write
$\R_\rho^0=\R_\rho\cap X_\rho^2$. By
Lemma~\ref{lem:gpc-finite-basis}, it is enough to prove
\[
  (\R_\rho^0)_\alpha\subseteq\R_\rho^0.
\]
Suppose $(\x,\y)\in(\R_\rho^0)_\alpha$. By the definition of $\R_\rho^0$, every edge of the witnessing $\alpha$-path corresponds to a relational atom in $\Omega_\rho$, and every label on the path belongs to $X_\rho$. Since the path is finite, all its labels and relational atoms have appeared by some stage of the branch; by persistence, they therefore occur together in some $\sigma_i$. Since $\xt\mapsto_{\mathcal S(\gpc)}\alpha$, closing the control of $\sigma_i$ adds the endpoint atom $\x\rel\y$.

If $\rho$ is finite, persistence puts this witness in its terminal
sequent, whose control is saturated; hence
$\x\rel\y\in\Omega_\rho$. If $\rho$ is infinite and the endpoint
atom is not already present at $\sigma_i$, its control is
unsaturated. Fairness eventually makes that control saturated or
processes it by $\mathsf r_\gpc$; in either case
$\x\rel\y$ occurs in a later control and therefore belongs to
$\Omega_\rho$. Thus $(\R_\rho^0)_\alpha\subseteq\R_\rho^0$.
Since $\gpc(\alpha)$ was arbitrary, $(X_\rho,\R_\rho^0)$ is a
$\gpc$-frame. Together with the isolated reflexive point $\star$,
this shows that $(\W_\rho,\R_\rho)$ is a nonempty
$\gpc$-frame.

Every label occurring in $\Gamma_\rho\cup\Delta_\rho$ belongs to
$X_\rho$, and hence $\lambda_\rho(\x)=\x$. We prove simultaneously
by induction on $\varphi$ that
\[
  \x:\varphi\in\Gamma_\rho
  \Longrightarrow \fw_\rho,\x\models\varphi,
  \qquad
  \x:\varphi\in\Delta_\rho
  \Longrightarrow \fw_\rho,\x\not\models\varphi.
\]

\noindent\textsc{Case for $\varphi=\bot$.}
The left implication is vacuous by observation (\ref{eq: completeness}) while the right implication follows from the semantics of $\bot$.

\smallskip
\noindent\textsc{Case for $\varphi=p\in\Prop$.}
The left implication follows from the definition of $\V_\rho$. For
the right implication, $\x:p\in\Delta_\rho$ and observation (\ref{eq: completeness}) gives
$\x:p\notin\Gamma_\rho$; hence $p\notin\V_\rho(\x)$ and
$\fw_\rho,\x\not\models p$.

\smallskip
\noindent\textsc{Case for $\varphi=\psi\to\gamma$.}
For a finite branch, the terminal sequent is saturated; for an
infinite branch, fairness eventually saturates or processes each
persistent implication occurrence. Hence
\[
\begin{aligned}
  \x:\psi\to\gamma\in\Gamma_\rho
  &\Longrightarrow
  \x:\psi\in\Delta_\rho
  \text{ or }\x:\gamma\in\Gamma_\rho,\\
  \x:\psi\to\gamma\in\Delta_\rho
  &\Longrightarrow
  \x:\psi\in\Gamma_\rho
  \text{ and }\x:\gamma\in\Delta_\rho.
\end{aligned}
\]
In the first case, the induction hypothesis makes $\psi$ false or
$\gamma$ true at $\x$, and therefore
$\fw_\rho,\x\models\psi\to\gamma$. In the second, it makes $\psi$
true and $\gamma$ false at $\x$, and therefore
$\fw_\rho,\x\not\models\psi\to\gamma$.

\smallskip
\noindent\textsc{Case for $\varphi=\K\psi$.}
Suppose $\x:\K\psi\in\Gamma_\rho$. Every finite set $F$ of
$\Omega_\rho$-successors of $\x$ is assigned a common polarity.
Indeed, choose a node containing the modal formula and all atoms
$\x\rel\y$ with $\y\in F$. Terminal saturation in the finite case,
or fairness in the infinite case, makes the occurrence saturated or
processes it by $\K\mathsf L$. Since $F$ persists, either
$\y:\psi\in\Gamma_\rho$ for every $\y\in F$, or
$\y:\psi\in\Delta_\rho$ for every $\y\in F$.

If $\x$ has no $\Omega_\rho$-successor, both alternatives are
vacuous. Otherwise fix one successor $\y_0$. The preceding
observation puts $\y_0:\psi$ on one side; suppose it belongs to
$\Gamma_\rho$, the other case being symmetric. For any other
successor $\y$, apply the observation to $F=\{\y_0,\y\}$. Its common
polarity cannot be right-hand, since that would put
$\y_0:\psi$ in both branch unions, contradicting (\ref{eq: completeness}). Hence
\[
  \forall\y\,
  (\x\rel\y\in\Omega_\rho\Rightarrow\y:\psi\in\Gamma_\rho),
\]
or symmetrically the same statement holds with $\Delta_\rho$ in
place of $\Gamma_\rho$. The induction hypothesis therefore makes
$\psi$ uniformly true or uniformly false at all successors of
$\x$, and so $\fw_\rho,\x\models\K\psi$.

Now suppose $\x:\K\psi\in\Delta_\rho$. Terminal saturation or
fairness supplies distinct labels $\y,\z$ such that
\[
  \x\rel\y,\x\rel\z\in\Omega_\rho,\qquad
  \y:\psi\in\Gamma_\rho,\qquad
  \z:\psi\in\Delta_\rho.
\]
By the induction hypothesis, $\psi$ is true at $\y$ and false at
$\z$. Hence $\fw_\rho,\x\not\models\K\psi$.

This completes the induction. By preservation,
$\Omega\subseteq\Omega_\rho$, $\Gamma\subseteq\Gamma_\rho$, and
$\Delta\subseteq\Delta_\rho$. Hence $\fw_\rho,\lambda_\rho$
satisfy the control of the root sequent which is $\sigma$ and every formula in its antecedent, while
every formula in its consequent is false. Therefore $\fw_\rho,\lambda_\rho\not\Vdash\sigma$. Since $\fw_\rho$ is based on a $\gpc$-frame, $\sigma$ is falsifiable
over the class of $\gpc$-frames.
\end{proof}

\begin{theorem}[Completeness]\label{thm:completeness}
Let $\gpc$ be a finite set of GPCs and let $\sigma$ be a sequent. If
$\sigma$ is valid over the class of $\gpc$-frames, then
$\KW_\gpc\vdash\sigma$.
\end{theorem}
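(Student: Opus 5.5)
The completeness theorem follows almost immediately from Lemma~\ref{lem:proof-search} and Proposition~\ref{prop:proof-search}; the plan is to combine them contrapositively.

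Fix a finite set $\gpc$ of GPCs and a sequent $\sigma$ that is valid over the class of $\gpc$-frames. By Lemma~\ref{lem:proof-search}, $\sigma$ has a proof-search tree $\pi$ in $\KW_\gpc$, built by the fair queue-based strategy described before that lemma. Proposition~\ref{prop:proof-search} then gives a dichotomy: either $\pi$ is a $\KW_\gpc$-proof of $\sigma$, or $\sigma$ is falsifiable over the class of $\gpc$-frames. Validity of $\sigma$ excludes the second alternative, so $\pi$ is a proof and $\KW_\gpc\vdash\sigma$.

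The one point needing care is that $\pi$ really is a proof in the sense of the definition, that is, a \emph{finite} tree whose leaves are initial sequents and whose inner nodes are rule instances of $\KW_\gpc$. Two things must be checked.

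\begin{itemize}
\item \emph{The rule instances are genuine.} The preserving variants of the logical rules only keep the principal formula as an additional side formula. Since contexts are sets, each such variant is literally an instance of the corresponding rule in Tables~\ref{tab:rules1} and~\ref{tab:rules2}. This also holds for $\K\mathsf R$, whose premise already retains $\x:\K\varphi$, and for $\K\mathsf L$, whose premises retain $\x:\K\varphi$.
\item \emph{The tree is finite.} The ``not a proof'' case in Proposition~\ref{prop:proof-search} was extracted from a bad branch: either a finite branch ending in a saturated leaf, or an infinite branch. So if no bad branch exists, every branch is finite and ends in an initial sequent. Because $\pi$ is finitely branching, K\H{o}nig's Lemma then makes $\pi$ finite.
\end{itemize}

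This reading of the dichotomy is the main, though minor, obstacle. It requires stating explicitly that a proof-search tree without bad branches is a finite tree whose leaves are all initial sequents. Once that is said, the theorem is immediate, and combining it with the Soundness theorem yields an exact characterisation of $\KW_\gpc$-provability as validity over $\Fr_\gpc$.
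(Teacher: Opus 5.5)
Your proposal is correct and follows the paper's proof exactly: take a proof-search tree from Lemma~\ref{lem:proof-search}, and use the validity of $\sigma$ to exclude the countermodel alternative in Proposition~\ref{prop:proof-search}. Your two extra checks are sound. Preserving instances are genuine rule instances because contexts are sets. Having no bad branch, together with finite branching, gives a finite tree by K\H{o}nig's Lemma. The paper does not state these checks separately, since they are already contained in the Proposition's conclusion that $\pi$ is a proof.
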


\begin{proof}
By Lemma~\ref{lem:proof-search}, choose a proof-search tree $\pi$ for
$\sigma$. Since $\sigma$ is valid, the countermodel alternative in
Proposition~\ref{prop:proof-search} is impossible. Hence $\pi$ is a
finite $\KW_\gpc$-proof of $\sigma$.
\end{proof}

\section{Further Results and Future Work}\label{sec: GL}

\noindent We shall finish the paper by announcing a few further results regarding the proof theoretic properties of our framework, as well as about $\mathsf{GL}$ with non-contingency. Because of the space limitations of the present paper, we do not develop these matters here; the full results and proofs will be presented in a future extended version.

First, we have established constructive cut elimination for all our calculi via a uniform cut admissibility proof. The proof proceeds by a standard induction on the lexicographically ordered tuple consisting of the rank of the cut formula and the combined height of the proofs of the premises, and uses appropriate versions of admissible weakening and label substitutions rules. We plan to further investigate whether we can obtain a uniform Craig interpolation result for all considered logics by employing our cut-free calculi.

Second, provability logic provides an important application of non-contingency. Under the
standard arithmetical interpretation of G\"odel--L\"ob logic, $\Box\varphi$ expresses provability in $\mathsf{PA}$, while $\K\varphi$ expresses decidability, namely that either $\varphi$ or $\neg\varphi$ is provable. 

Zolin~\cite{Zolin_2001} laid the foundation for this direction by providing sound and complete axiomatizations and cut-based sequent systems for arithmetic decidability. He conjectured that the weak transitivity axiom in his axiomatization is redundant. In ongoing work, we answer this question by deriving the weak transitivity axiom from the remaining axioms. Arithmetical completeness can be obtained in a relatively straightforward way by using the standard translation $(\K\varphi)^\Box\coloneqq \Box\varphi^\Box\vee\Box\neg\varphi^\Box$ and applying Solovay's arithmetical completeness theorem~\cite{Solovay_1976}. A conceptually stronger approach would adapt the proof of Solovay's theorem directly to the primitive non-normal language of arithmetic decidability, without passing through ordinary $\mathsf{GL}$. Developing such a direct proof is a substantial problem for future work and would be of independent interest in provability logic.

Third, building on Shamkanov's circular treatment of $\mathsf{GL}$~\cite{Shamkanov_circular_2014}, we have obtained a non-wellfounded version of our calculus for transitivity that is sound and complete for the transitive and conversely well-founded frames of $\mathsf{GL}$. We plan to study its cyclic formulation and the translation between non-wellfounded and cyclic proofs.

\vspace{2ex}
\printbibliography

\end{document}